\numberwithin{equation}{section}
\newtheorem{theorem}{Theorem}[section]
\newtheorem{corollary}[theorem]{Corollary}
\newtheorem{lemma}[theorem]{Lemma}
\newtheorem{remark}[theorem]{Remark}
\newtheorem*{acknowledgment}{Acknowledgments}
\newcommand{\N}{\mathbb{N}}
\newcommand{\Q}{\mathbb{Q}}
\newcommand{\R}{\mathbb{R}}
\newcommand{\sfd}{{\sf d}}
\renewcommand{\d}{{\mathrm d}}
\newcommand{\ppi}{{\mbox{\boldmath$\pi$}}}
\newcommand{\ssigma}{{\mbox{\boldmath$\sigma$}}}
\newcommand{\mm}{\mathfrak m}
\newcommand{\entv}{{\rm Ent}_{\mm}}                 
\newcommand{\prob}[1]{\mathscr P(#1)}
\newcommand{\probt}[1]{\mathscr P_2(#1)}
\newcommand{\geo}{{\rm{Geo}}}                       
\newcommand{\e}{{\rm{e}}}
\newcommand{\Sp}{{\rm{Sp}}}                                                      
\newcommand{\gopt}{{\rm{OptGeo}}}                   
\newcommand{\res}[2]{{{\rm restr}_{#1}^{#2}}}
\newcommand{\restr}[1]{\lower3pt\hbox{$|_{#1}$}}
\newcommand{\diam}{\mathop{\rm diam}\nolimits}
\begin{document}

\title[Non-branching geodesics and optimal maps in strong $CD(K,\infty)$-spaces]
{Non-branching geodesics and optimal maps\\ in strong $CD(K,\infty)$-spaces}

\author{Tapio Rajala}
\address{Department of Mathematics and Statistics \\
         P.O. Box 35 (MaD) \\
         FI-40014 University of Jyv\"askyl\"a \\
         Finland}
\email{tapio.m.rajala@jyu.fi}

\author{Karl-Theodor Sturm}
\address{Institut f\"ur Angewandte Mathematik, Universit\"at Bonn\\
Endenicher Allee 60 \\
D-53115 Bonn \\
Germany }
\email{sturm@uni-bonn.de}

\subjclass[2000]{Primary 53C23. Secondary 28A33, 49Q20}
\keywords{Ricci curvature, metric measure spaces, branching geodesics}
\date{\today}

\begin{abstract}
 We prove that in metric measure spaces where the entropy functional is $K$-convex along every Wasserstein geodesic
 any optimal transport between two absolutely continuous measures with finite second moments lives on
 a non-branching set of geodesics.
 As a corollary we obtain that in these spaces there exists only one optimal transport plan between any two
 absolutely continuous measures with finite second moments and this plan is given by a map.

 The results are applicable in metric measure spaces having Riemannian Ricci curvature bounded below,
 and in particular they hold also for Gromov-Hausdorff limits of Riemannian manifolds with Ricci curvature bounded from below
 by some constant.
\end{abstract}

\maketitle

\section{Introduction}

Ricci curvature lower bounds in general metric measure spaces were studied by the second author in \cite{S2006I, S2006II}
and at the same time with a similar approach by Lott and Villani in \cite{LV2009}. There the lower bound $K \in \R$ on the Ricci curvature without
a reference to the dimension of the metric measure space was defined as $K$-convexity of the entropy functional
along Wasserstein geodesics (see Section \ref{sec:preli} for details). These spaces are called (weak) $CD(K,\infty)$-spaces.
The word \emph{weak} is sometimes used to emphasize that the convexity is required only along one geodesic between any two given
probability measures.

A property of the $CD(K,\infty)$-spaces which complicates the theory is the possibility to have branching geodesics.
For example the space $\R^2$ with the $l_\infty$-norm is a $CD(0,\infty)$-space and it has lots of
branching geodesics. 
Being the limit as $p\to\infty$ (in any reasonable sense) of the spaces 
$\R^2$ with the $l_p$-norm -- which are non-branching 
CD$(K,\infty)$-spaces -- this example in particular illustrates that 
being non-branching is not a stable property.
A number of results in $CD(K,\infty)$-spaces have been proven only under the extra assumption that there are
no branching geodesics in the space. Although in some of the results this assumption has recently been removed (see for example
\cite{R2011, R2011b}), in many it still remains.

Because branching geodesics are hard to deal with, it is reasonable to consider more restrictive definitions that exclude
spaces with branching geodesics or at least limit the amount of branching that can occur.
One of the essential properties of the definitions of Ricci curvature lower bounds in metric measure spaces
is the stability under the measured Gromov-Hausdorff convergence. Therefore any stable definition that extends the Riemannian
case should include the limit spaces of Riemannian manifolds with uniform Ricci curvature lower bounds. While Riemannian
manifolds are known to be non-branching, to our knowledge it is not known whether this holds for their limit spaces. 
The spaces we consider in this paper cover also these limit spaces. Although our result does not rule out the possibility to 
have some branching geodesics, it still says that there has to be so few branching geodesics that optimal transports between 
any two absolutely continuous measures do not see them.

Before stating our main result we fix some terminology.
In this paper we will always assume $(X,\sfd)$ to be a complete separable geodesic metric space
and $\mm$ to be a locally finite Borel measure.
We refer to Section \ref{sec:preli} for some details on optimal mass transportation and $CD(K,\infty)$ condition in such spaces.
We call a space $(X,\sfd,\mm)$ \emph{essentially non-branching} if 
for every $\mu_0, \mu_1 \in \probt X$ which are absolutely continuous with respect to $\mm$
we have that any $\ppi \in \gopt(\mu_0,\mu_1)$ is concentrated on a set of non-branching geodesics.
The space $(X,\sfd, \mm)$ is said to be a strong $CD(K,\infty)$-space if the entropy 
$\entv$ is $K$-convex along every $\ppi \in \gopt(\mu_0,\mu_1)$ for every $\mu_0, \mu_1 \in \probt X$.

\begin{theorem}\label{thm:main}
Every strong $CD(K,\infty)$-space is essentially non-branching.
\end{theorem}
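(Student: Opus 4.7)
I argue by contradiction. Suppose $\mu_0,\mu_1\in\probt X$ are absolutely continuous with respect to $\mm$ and that some $\ppi\in\gopt(\mu_0,\mu_1)$ gives positive mass to branching geodesics. A standard reduction (truncating the densities and restricting to a ball) lets me assume the endpoints have bounded densities with compact support, so both have finite entropy. Applying the strong $CD(K,\infty)$ inequality along $\ppi$ itself yields
\[
\entv\bigl((e_s)_*\ppi\bigr)\le(1-s)\entv(\mu_0)+s\,\entv(\mu_1)-\tfrac{K}{2}s(1-s)W_2^2(\mu_0,\mu_1)
\]
for all $s\in[0,1]$; in particular each $\mu_s:=(e_s)_*\ppi$ is absolutely continuous with finite entropy, which makes the subsequent entropy computations meaningful.

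The next step is to exploit branching to produce a second optimal plan $\hat\ppi\in\gopt(\mu_0,\mu_1)$ with $\hat\ppi\neq\ppi$. By a measurable selection argument there exist a time $t_0\in(0,1)$ and disjoint Borel sets $A_1,A_2\subset\geo(X)$ with $\ppi(A_i)>0$ such that, after disintegration of $\ppi\restr{A_1\cup A_2}$ along $e_{t_0}$, the geodesics from $A_1$ and $A_2$ share midpoints but have different initial or terminal halves. Re-pairing the initial halves with the terminal halves in a non-identical Borel way produces a new measure $\hat\ppi$ on $\geo(X)$. The fact that branching forces the two families sharing a segment to have equal speed, together with $c$-cyclical monotonicity of $\mathrm{supp}(\ppi)$ and absolute continuity of $\mu_{t_0}$, ensures that the glued endpoint pairs are at precisely the distance realized by the concatenated paths, so these concatenations are geodesics and $\hat\ppi\in\gopt(\mu_0,\mu_1)$.

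Form $\bar\ppi:=\tfrac12(\ppi+\hat\ppi)\in\gopt(\mu_0,\mu_1)$. By the strong $CD(K,\infty)$ hypothesis, $\entv$ is $K$-convex along each of $\ppi,\hat\ppi,\bar\ppi$. By construction $(e_{t_0})_*\ppi=(e_{t_0})_*\hat\ppi$, whereas the re-pairing forces $(e_s)_*\ppi\ne(e_s)_*\hat\ppi$ for some $s\ne t_0$. Strict convexity of $r\mapsto r\log r$ then yields
\[
\entv\bigl((e_s)_*\bar\ppi\bigr)<\tfrac12\,\entv\bigl((e_s)_*\ppi\bigr)+\tfrac12\,\entv\bigl((e_s)_*\hat\ppi\bigr).
\]
Splitting $\bar\ppi$ through $t_0$ and applying $K$-convexity to the halves on $[0,t_0]$ and $[t_0,1]$ separately, together with the fact that $\bar\ppi$, $\ppi$ and $\hat\ppi$ share a common marginal at $s=0$, $s=t_0$ and $s=1$, forces an equality in the convexity inequality along $\bar\ppi$ that is inconsistent with the strict gap above. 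This is the desired contradiction.

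The delicate step is the construction of $\hat\ppi$. The re-pairing must be performed via Borel disintegration and the resulting concatenations must be verified to be genuine optimal geodesics, which uses both cyclical monotonicity of $\mathrm{supp}(\ppi)$ and the absolute continuity of $\mu_{t_0}$ supplied by strong $CD$ applied to $\ppi$. A secondary difficulty is extracting the contradiction from the strict convexity inequality: the \emph{strong} (as opposed to weak) $CD(K,\infty)$ hypothesis is essential here, since we need $K$-convexity simultaneously along the three distinct plans $\ppi$, $\hat\ppi$ and $\bar\ppi$ and along their restrictions to $[0,t_0]$ and $[t_0,1]$.
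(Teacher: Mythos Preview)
Your argument has a gap at its core: the re-pairing cannot produce the strict entropy inequality you need, and even if it could, no contradiction would follow. Any re-pairing of initial halves $\res{0}{t_0}\gamma$ with terminal halves $\res{t_0}{1}\gamma$ drawn from the same pool leaves the marginal distributions of initial and of terminal halves unchanged; hence $(e_s)_*\hat\ppi=(e_s)_*\ppi$ for \emph{every} $s$, since the $s$-marginal is determined by the initial half for $s\le t_0$ and by the terminal half for $s\ge t_0$. In the genuine forward-branching situation (geodesics agreeing on $[0,t_0]$ and diverging afterwards) the initial halves already coincide, so re-pairing produces no new curve at all, and any redistribution of mass among the existing geodesics that keeps $(e_0)_*\hat\ppi=\mu_0$ necessarily changes $(e_1)_*\hat\ppi$. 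Thus your claim $(e_s)_*\ppi\ne(e_s)_*\hat\ppi$ cannot be arranged while keeping $\hat\ppi\in\gopt(\mu_0,\mu_1)$. And even if it could, the $K$-convexity inequalities along $\ppi,\hat\ppi,\bar\ppi$ and their restrictions are only \emph{upper} bounds on the entropy; agreement of the three plans at times $0,t_0,1$ makes these bounds coincide, but nothing forces equality at intermediate times, so a strict gap below the common bound contradicts nothing.

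The paper's proof is different in kind and essentially quantitative. It produces, for every small $\epsilon>0$, two subplans $\ppi^{\text{up}},\ppi^{\text{down}}$ (of restrictions of $\ppi$) that agree on $[0,t]$ and are \emph{mutually singular} at time $t+\epsilon$; mutual singularity gives the exact identity $\entv\bigl(\tfrac12(\mu^{\text{up}}_{t+\epsilon}+\mu^{\text{down}}_{t+\epsilon})\bigr)=\tfrac12\entv(\mu^{\text{up}}_{t+\epsilon})+\tfrac12\entv(\mu^{\text{down}}_{t+\epsilon})-\log 2$, a fixed drop independent of $\epsilon$. Chaining $K$-convexity for the sum on $[0,t+\epsilon]$ against $K$-convexity for each piece on $[t,1]$, together with a priori density bounds at the endpoints, converts this drop into an inequality that fails as $\epsilon\to 0$. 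The substantial work---disintegration to detect branching, a measurable selection reducing to pairs of geodesics that branch exactly once, and several further restrictions localising the branching to the interval $[t,t+\epsilon]$---is precisely what makes the $\log 2$ drop exploitable; a merely qualitative strict inequality, as in your sketch, would not suffice.
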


One of the cases which is covered by Theorem \ref{thm:main} are the $RCD(K,\infty)$-spaces, that is,
spaces with Riemannian Ricci curvature bounded from below by some constant $K \in \R$.
These spaces were recently defined and studied in \cite{AGS2011, AGS2011b}, see also \cite{AGMR2012}.
They were obtained by reinforcing the $CD(K,\infty)$-spaces with a requirement that the local structure of
the space must be Hilbertian.
The Hilbertian structure immediately rules out spaces like the above mentioned $\R^2$ with the $l_\infty$-norm.
In \cite{AGS2011b} it was shown that one of the equivalent formulations of $RCD(K,\infty)$-spaces is that
any probability measure with finite second moment is the starting point of an $\textrm{EVI}_K$-gradient flow of the entropy.
This condition is known to imply $K$-convexity of the entropy along every geodesic \cite{DS2008}.

In \cite{AGS2011b} it was also proven that the definition of $RCD(K,\infty)$ with a finite reference measure is 
stable under the measured Gromov-Hausdorff convergence (or under the $\mathbb D$-convergence introduced in \cite{S2006I}).
Later in \cite{GMS2012} the stability was proven with more general reference measures. 
When we combine the stability with Theorem \ref{thm:main} we arrive at the following corollary 
which in fact was our main motivation to write this paper.

\begin{corollary}\label{cor:rcd}
 The $RCD(K,\infty)$ condition  is stable under measured Gromov-Hausdorff convergence (or under the $\mathbb D$-convergence) and it implies essential non-branching.
\end{corollary}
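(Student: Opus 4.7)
The plan is to argue by contradiction: suppose $\ppi \in \gopt(\mu_0,\mu_1)$ with $\mu_0,\mu_1 \ll \mm$ is concentrated on a set of geodesics that is branching on a set of positive $\ppi$-measure, and exhibit a second optimal plan $\ssigma \in \gopt(\mu_0,\mu_1)$ differing from $\ppi$ at some intermediate time. Applying the strong $CD(K,\infty)$ hypothesis to $\ppi$, $\ssigma$, and their convex combinations, together with strict convexity of $r \mapsto r \log r$, will then produce a contradiction.

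A preliminary observation is that strong $CD(K,\infty)$ forces every intermediate marginal $(e_s)_*\ppi$ to be absolutely continuous with finite entropy, since for every $s \in [0,1]$ one has
\[
\entv((e_s)_*\ppi) \le (1-s)\entv(\mu_0) + s\,\entv(\mu_1) - \frac{K}{2} s(1-s) W_2^2(\mu_0,\mu_1) < \infty,
\]
and the analogous bound holds along every restriction $\res{a}{b}\ppi$, since such restrictions are themselves optimal geodesic plans. This will guarantee that all intermediate measures we produce have finite entropy, so that strict convexity of $r\log r$ applies to them.

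The next step is a measurable-selection argument applied to the branching relation on $\operatorname{supp}\ppi$: I would extract $t \in (0,1)$ and a Borel decomposition $\ppi = \lambda \ppi^1 + (1-\lambda)\ppi^2$ with $\lambda \in (0,1)$, such that $(e_0,e_t)_*\ppi^1 = (e_0,e_t)_*\ppi^2 =: \eta$, while the $\res{t}{1}$-conditionals of $\ppi^1$ and $\ppi^2$ given $(e_0,e_t)$ disagree on a set of positive $\eta$-measure. Disintegrating $\ppi^i = \int \sigma^i_{(x,y)}\, d\eta(x,y)$ and gluing the $[0,t]$-segment of $\ppi^i$ to the $[t,1]$-segment of $\ppi^j$ ($j \ne i$) at the common intermediate marginal then produces a measure $\ssigma$ on continuous curves between $\mu_0$ and $\mu_1$. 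The central technical step is to show $\ssigma \in \gopt(\mu_0,\mu_1)$: a priori the glued curves are not geodesics, but computing the squared-distance cost of $\ssigma$ as a coupling of $\mu_0, \mu_1$ and invoking the optimality of the restrictions of $\ppi$ together with the pointwise triangle inequality at time $t$ should yield cost equal to $W_2^2(\mu_0,\mu_1)$, forcing each glued curve to be a genuine geodesic. I expect this step to be the main technical obstacle.

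Given $\ssigma \in \gopt(\mu_0,\mu_1)$ with $(e_s)_*\ppi \ne (e_s)_*\ssigma$ for some $s \in (t,1)$ and both intermediate measures of finite entropy, the optimal mixture $\frac{1}{2}\ppi + \frac{1}{2}\ssigma \in \gopt(\mu_0,\mu_1)$ also has $K$-convex entropy along it. Strict convexity of $r \log r$ gives
\[
\entv\bigl((e_s)_*\bigl(\tfrac{1}{2}\ppi+\tfrac{1}{2}\ssigma\bigr)\bigr) < \tfrac{1}{2}\entv((e_s)_*\ppi) + \tfrac{1}{2}\entv((e_s)_*\ssigma),
\]
and a careful bookkeeping of the $K$-convexity bounds along $\ppi$, $\ssigma$, and their mixture on the sub-intervals $[0,s]$ and $[s,1]$, combined with the strict-convexity gap above, should then yield the desired contradiction with strong $CD(K,\infty)$.
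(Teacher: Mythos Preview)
Your proposal addresses only the essential non-branching half of the corollary and says nothing about stability; in the paper, stability is simply cited from \cite{AGS2011b, GMS2012}, and essential non-branching is deduced by combining Theorem~\ref{thm:main} with the fact (from \cite{AGS2011b, DS2008}) that $RCD(K,\infty)$ implies strong $CD(K,\infty)$. The corollary itself carries no new argument beyond these citations.

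More seriously, your direct attack on essential non-branching has a genuine gap at the contradiction step. Two optimal plans $\ppi, \ssigma \in \gopt(\mu_0,\mu_1)$ with $(e_s)_\#\ppi \ne (e_s)_\#\ssigma$ do \emph{not} by themselves contradict strong $CD(K,\infty)$. The strict inequality
\[
\entv\Big((e_s)_\#\tfrac12(\ppi+\ssigma)\Big) < \tfrac12\entv\big((e_s)_\#\ppi\big) + \tfrac12\entv\big((e_s)_\#\ssigma\big)
\]
says only that the entropy along the mixture lies strictly below the average of the entropies along $\ppi$ and $\ssigma$; since $K$-convexity is an \emph{upper} bound with the same right-hand side for all three plans (they share endpoints), a smaller left-hand side for the mixture is entirely consistent with it, and no bookkeeping on sub-intervals $[0,s]$, $[s,1]$ will turn this into a contradiction. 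What the paper's proof of Theorem~\ref{thm:main} actually uses is a \emph{quantitative} drop of exactly $\log 2$, obtained by arranging the two branches to be mutually \emph{singular} (not merely unequal) at time $t+\epsilon$, together with the freedom to make the branching interval $[t,t+\epsilon]$ arbitrarily short. It is a fixed entropy drop over a vanishingly short interval that violates $K$-convexity; a qualitative strict inequality cannot do this job. Your construction of $\ssigma$ is also problematic: for genuinely branching pairs the restrictions $\res{0}{t}\gamma$ already coincide, so ``gluing the $[0,t]$-segment of $\ppi^i$ to the $[t,1]$-segment of $\ppi^j$'' only recovers curves already in the support of $\ppi$ and does not produce a new plan with the correct endpoint marginals.
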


As we already mentioned, we are not aware of any other non-branching results even for the
Gromov-Hausdorff limits of Riemannian manifolds with Ricci curvature bounded from below by some constant.

Another corollary of our result is that the $RCD(K,\infty)$ condition implies the formulation of $CD(K,\infty)$ that was
used by Lott and Villani \cite{LV2009}. They required convexity type inequalities for a class of functionals 
instead of just $\entv$, and hence their definition was at least a priori more restrictive than the definition of $CD(K,\infty)$
we use here, following \cite{S2006I}.
Corollary \ref{cor:LVCD} is proven exactly as in the non-branching $CD(K,\infty)$-spaces.
For the proof, see for instance \cite[Theorem 30.32]{V2009} or \cite[Proposition 4.2]{S2006II}.

\begin{corollary}\label{cor:LVCD}
 The $RCD(K,\infty)$ condition implies the $CD(K,\infty)$ condition by Lott and Villani.
\end{corollary}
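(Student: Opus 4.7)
The plan is in three main steps. First, show that $RCD(K,\infty)$ implies strong $CD(K,\infty)$, so that Theorem~\ref{thm:main} gives essential non-branching. Second, observe that the classical derivation of the Lott--Villani displacement-convexity inequality in non-branching $CD(K,\infty)$-spaces uses non-branching only on the support of the relevant optimal plan, and therefore carries over verbatim to the essentially non-branching setting, provided the endpoints are absolutely continuous. Third, extend from absolutely continuous to general endpoints by approximation.

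\emph{Step 1: From $RCD(K,\infty)$ to essential non-branching.} As recalled in the excerpt, $RCD(K,\infty)$ is characterised by the existence of an $\mathrm{EVI}_K$-gradient flow of $\entv$ starting from every $\mu\in\probt X$, and by \cite{DS2008} the $\mathrm{EVI}_K$ condition forces $K$-convexity of $\entv$ along \emph{every} $W_2$-geodesic -- i.e.\ the strong $CD(K,\infty)$ condition. Theorem~\ref{thm:main} applied to this space then yields that every $RCD(K,\infty)$-space is essentially non-branching.

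\emph{Step 2: Lott--Villani displacement convexity for absolutely continuous endpoints.} The Lott--Villani formulation of $CD(K,\infty)$ requires that, for every $\mu_0,\mu_1\in\probt X$, there exist a $W_2$-geodesic along which every functional $\mathcal{U}_\mm(\mu):=\int U(\rho)\,\d\mm$ (for $U$ in the displacement convexity class $DC_\infty$ and $\rho$ the $\mm$-density of $\mu$) is $\lambda$-convex, with $\lambda$ determined by $K$ and $U$. The classical proof in non-branching spaces (\cite[Prop.~4.2]{S2006II}, \cite[Thm.~30.32]{V2009}) derives this from $K$-convexity of $\entv$ alone via a pointwise Jacobian-type computation along $\ppi$-a.e.\ geodesic $\gamma$ in the support of the optimal plan. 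Non-branching is used exclusively to ensure that the evaluation maps $e_t$ are injective on $\mathrm{supp}\,\ppi$, so that the time-$t$ density $\rho_t(e_t(\gamma))$ along each $\gamma$ is a well-defined function governed by a change-of-variables formula. For absolutely continuous $\mu_0,\mu_1$, essential non-branching from Step~1 guarantees exactly this property on a $\ppi$-full-measure set of geodesics, so the classical computation applies without change and delivers the displacement-convexity inequality along some $\ppi\in\gopt(\mu_0,\mu_1)$.

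\emph{Step 3: Extension to arbitrary endpoints.} For general $\mu_0,\mu_1\in\probt X$, approximate by absolutely continuous $\mu_0^n,\mu_1^n\ll\mm$ (e.g.\ by convolution against a bounded compactly supported density, or by a standard mollification adapted to the metric-measure setting), apply Step~2 to the approximants to get optimal plans $\ppi^n$ along which the inequality holds, and pass to a subsequential $W_2$-limit $\ppi$ using compactness of the space of optimal plans together with lower semicontinuity of the functionals $\mathcal{U}_\mm$. The inequality at the limit gives the Lott--Villani condition for the original $\mu_0,\mu_1$.

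The principal obstacle -- and the content of the authors' remark that the proof is ``exactly as in the non-branching case'' -- is the verification that the classical Jacobian-based argument never invokes non-branching outside the support of the optimal plan between absolutely continuous endpoints. Once this is recognised, essential non-branching is precisely the right hypothesis and the classical proof goes through unmodified.
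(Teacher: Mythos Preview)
Your proposal is correct and matches the paper's approach: the paper gives no detailed argument for this corollary but simply refers to the classical proof in non-branching $CD(K,\infty)$-spaces (\cite[Theorem~30.32]{V2009}, \cite[Proposition~4.2]{S2006II}), asserting it goes through once essential non-branching (via Theorem~\ref{thm:main}) is available---and your Steps~1 and~2 spell out exactly why that is so. Your Step~3 is a mild elaboration beyond what the cited references actually treat (Sturm's Proposition~4.2 is formulated for absolutely continuous endpoints, so the approximation step is not needed there), but it is harmless and does not deviate from the paper's line.
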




By inspecting the proof of \cite[Theorem 3.3]{G2011} where the existence of optimal maps in non-branching $CD(K,\infty)$-spaces
was shown we can make a refinement of the statement of Theorem \ref{thm:main}.
Regarding this refinement, stated in Corollary \ref{cor:maps},  we note that in \cite{G2011} there was an extra assumption 
for the measures $\mu_0$ and $\mu_1$ to have finite entropy. This assumption was needed for showing that all absolutely
continuous measures with respect to a measure $\ppi \in \gopt(\mu_0,\mu_1)$ satisfying \eqref{eq:CDdef} also satisfy \eqref{eq:CDdef}.
Here this conclusion is already as an assumption so finiteness of the initial and final entropies are not needed.

\begin{corollary}\label{cor:maps}
 Let $(X,\sfd, \mm)$ be a strong $CD(K,\infty)$-space.
 Then for every $\mu_0, \mu_1 \in \probt X$ that are absolutely continuous with respect to $\mm$
 there is a unique $\ppi \in \gopt(\mu_0,\mu_1)$ and it is induced by a map.
\end{corollary}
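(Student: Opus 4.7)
The strategy is to combine Theorem \ref{thm:main} with the argument of \cite[Theorem 3.3]{G2011}, using the strong $CD(K,\infty)$ hypothesis to dispense with the finite-entropy assumption required there.

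First I would fix $\mu_0, \mu_1 \in \probt X$ absolutely continuous with respect to $\mm$, pick any $\ppi \in \gopt(\mu_0,\mu_1)$, and apply Theorem \ref{thm:main} to locate $\ppi$ on a Borel set $G \subset \geo(X)$ of non-branching geodesics. The main task is to show that the evaluation at time $0$ is essentially injective on $G$, i.e.\ for $\mu_0$-a.e.\ $x \in X$ there is a single $\gamma \in G$ with $\gamma_0 = x$, for then $\ppi = F_\sharp \mu_0$ for the Borel selection $F(x) = \gamma$. Following Gigli, I would assume the opposite and disintegrate $\ppi$ along $e_0$ to extract two sub-plans $\ppi^1, \ppi^2 \ll \ppi$ with a common time-$0$ marginal but concentrated on disjoint families of geodesics (in particular producing mutually singular time-$t$ marginals for some $t \in (0,1)$). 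Each normalized $\ppi^i$ lies in $\gopt$ between its own marginals, since $c$-cyclical monotonicity passes to sub-plans, and those marginals are absolutely continuous with finite second moment. Hence the strong $CD(K,\infty)$ assumption directly gives $K$-convexity of $\entv$ along each $\ppi^i$, so Theorem \ref{thm:main} applies, and the usual entropy/non-branching contradiction, exactly as in \cite[Theorem 3.3]{G2011}, rules this case out.

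Uniqueness then follows by a standard convex combination argument: given $\ppi_1, \ppi_2 \in \gopt(\mu_0,\mu_1)$, the plan $\ppi := \tfrac12(\ppi_1+\ppi_2)$ also lies in $\gopt(\mu_0,\mu_1)$, so by the first part it is induced by a map $F$. Since $\ppi_1$ and $\ppi_2$ are both absolutely continuous with respect to $\ppi$, the same $F$ induces each of them, forcing $\ppi_1 = \ppi_2$.

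The main obstacle is the step that forced the finite-entropy hypothesis in \cite{G2011}, namely propagating the $CD$-type inequality from $\ppi$ to the sub-plans $\ppi^1, \ppi^2$: in the plain $CD(K,\infty)$ setting this propagation had to be derived and required $\entv(\mu_0), \entv(\mu_1) < \infty$ in order to control the entropies of the restrictions, whereas in the strong $CD(K,\infty)$ framework it is built into the definition (applied to each $\ppi^i$ after normalization). With that obstruction removed, the remaining measurable-selection and disintegration steps are routine.
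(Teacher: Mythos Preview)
Your outline correctly identifies the overall scaffolding (reduction to ``every $\ppi$ is induced by a map,'' Gigli's splitting into $\ppi^1\perp\ppi^2$ with common time-$0$ marginal, uniqueness via averaging), and your observation that in the strong $CD(K,\infty)$ setting the $K$-convexity passes to sub-plans for free is exactly the point the paper makes. However, the crucial step---extracting a contradiction from the existence of $\ppi^1,\ppi^2$---is where your proposal has a genuine gap.

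First, you assert that the disjoint geodesic families produce ``mutually singular time-$t$ marginals for some $t\in(0,1)$.'' This is the wrong direction: Gigli's argument (via \cite[Lemma~3.2]{G2011}) establishes precisely the opposite, namely $\liminf_{t\searrow 0}\mm(\{\rho_t^i>0\})\ge \mm(D)$ for $i=1,2$, so for small $t$ the supports of $(\e_t)_\#\ppi^1$ and $(\e_t)_\#\ppi^2$ \emph{overlap} in a set $E$ of positive $\mm$-measure. Second, and more seriously, once you have this overlap, Gigli's final step is to say that two distinct geodesics in the support of an optimal plan cannot meet at an intermediate time; that uses non-branching of the ambient \emph{space}. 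Theorem~\ref{thm:main} gives you only that $\ppi$ (and hence $\ppi^1,\ppi^2$) lives on a non-branching set $G$, and ``non-branching'' for $G$ says nothing about two geodesics in $G$ that merely cross at time $t$ without sharing an initial segment. Re-invoking Theorem~\ref{thm:main} on $\ppi^i$ is circular: it just reconfirms what you already know. The paper explicitly flags this: ``In the essentially non-branching spaces this conclusion is not so clear, so we argue differently.''

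The paper closes the gap with a new construction. It \emph{mixes} $\ppi^1$ and $\ppi^2$ at time $t$: disintegrate $\frac12(\res{0}{t})_\#(\ppi^1+\ppi^2)$ with respect to $\e_1$ and $\frac12(\res{t}{1})_\#(\ppi^1+\ppi^2)$ with respect to $\e_0$, take the fibrewise product, and splice back to a measure $\ppi^{\text{mix}}$ on $C([0,1];X)$. A cyclical-monotonicity computation shows that all the spliced curves are geodesics of the correct length, so $\ppi^{\text{mix}}\in\gopt$ between the same absolutely continuous marginals. But because $\ppi^1\perp\ppi^2$ while their time-$t$ marginals overlap on $E$, for $\mm$-a.e.\ $x\in E$ at least one of the left/right fibre measures is not a Dirac mass, so $\ppi^{\text{mix}}$ (or its time-reversal) genuinely branches. \emph{Now} Theorem~\ref{thm:main} applied to $\ppi^{\text{mix}}$ yields the contradiction. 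This mixing step is the idea missing from your sketch.
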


Although Corollary \ref{cor:maps} follows quite easily from the proof of \cite[Theorem 3.3]{G2011} together with Theorem \ref{thm:main}
we will give at the end of the paper an outline of the proof together with some details that are different from the proof by Gigli.

In the classical situation of Euclidean spaces the existence of optimal transport maps was proven by Brenier in \cite{B1987}.
Since then there have been several generalizations of this result. Most relevant in the context of this paper are the
generalizations of McCann \cite{M2001} for Riemannian manifolds,
of Bertrand \cite{B2008} for Alexandrov spaces, and the most recent results of Gigli \cite{G2011} in non-branching $CD(K,N)$ and 
$CD(K,\infty)$-spaces
and of Ambrosio and the first author \cite{AR2011} in strongly non-branching metric spaces. Notice that as in the approach by
Gigli \cite{G2011} our proof for the existence of optimal transport maps does not use Kantorovich potentials.

\section{Preliminaries}\label{sec:preli}

Let us first recall some definitions and results related to optimal mass transportation and Ricci curvature lower bounds
using optimal mass transportation. More details on this subject can be found for example in the book by Villani \cite{V2009}.

In this paper we always work in a complete separable geodesic metric space $(X, \sfd)$ equipped with a locally finite Borel measure $\mm$. 

\subsection{Optimal mass transportation}
We write as $\prob X$ the set of all the probability measures defined on the $\sigma$-algebra consisting of universally measurable sets of $X$.
We denote by $\probt X$ the subset of $\prob X$ consisting of probability measures with finite second moments.
We equip the space $\probt X$ with the Wasserstein $2$-distance
which for any two measures $\mu_0, \mu_1 \in \probt X$ is defined as
\begin{equation}\label{eq:W2def}
  W_2(\mu_0, \mu_1) = \left(\inf\left\{\int_{X\times X} \sfd(x,y)^2 \,\d\sigma(x,y)\right\}  \right)^{1/2},
\end{equation}
where the infimum is taken over all $\sigma \in \probt{X \times X}$ with $\mu_0 = (\mathtt{p}^1)_\#\sigma$  
and $\mu_1 = (\mathtt{p}^2)_\#\sigma$. Here, and later on, $\mathtt{p}^k$ denotes the projection to the $k$-th coordinate.

We call a plan $\sigma \in \probt{X \times X}$ that minimizes \eqref{eq:W2def} an optimal plan. Optimal plans exist under were mild assumptions.
In contrast, the existence of optimal maps is rare. By an optimal map we mean Borel $T \colon X \to X$ 
for which the plan $G_\#\mu_0$ given by the graph $G(x) = (x,T(x))$ of $T$ is optimal. For the arguments in
this paper, it is crucial to notice that any subplan of an optimal plan is also optimal in the sense that for $\sigma$
optimal between $\mu_0$ and $\mu_1$ any $\tilde\sigma \ll \sigma$ is optimal between $(\mathtt{p}^1)_\#\tilde\sigma$
and $\mu_1 = (\mathtt{p}^2)_\#\tilde\sigma$.

Any optimal plan is concentrated on a cyclically monotone set $M \subset X \times X$. This means that for any
family $(x_1,y_1), \dots ,(x_n,y_n) \in M$ and any permutation $p \colon \{1, \dots n\} \to \{1, \dots n\}$ we have
\[
 \sum_{i=1}^n \sfd(x_i,y_i)^2 \le \sum_{i=1}^n \sfd(p(x_i),y_i)^2.
\]
Intuitively this just means that optimal plans can not be improved.

\subsection{Geodesics}
We denote by $C([0,1];X)$ the space of continuous curves $\gamma \colon [0,1] \to X$.
All the geodesics we consider in this paper are constant speed geodesics parametrized by the unit interval $[0,1]$.
We denote the set of all such geodesics in $X$ by $\geo(X) \subset C([0,1];X)$, and for $\gamma \in C([0,1];X)$ and $t \in [0,1]$
we use the abbreviation $\gamma_t = \gamma(t)$.
The distance between $\gamma^1,\gamma^2 \in C([0,1];X)$ is given by $\sfd^*(\gamma^1,\gamma^2) = \max\{\sfd(\gamma_t^1,\gamma_t^2)\,:\,0\le t\le 1\}$.
Notice that the subspace $(\geo(X),\sfd^*)$, with $\sfd^*$ restricted to $\geo(X) \times \geo(X)$,
is complete and separable because the space $(X,\sfd)$ is. (This is because
$\geo(X)$ is a $\sfd^*$-closed subset of the separable space $C([0,1];X)$.)
We write $l(\gamma) = \sfd(\gamma_0,\gamma_1)$ for any geodesic $\gamma \in \geo(X)$ .

We define for all $s,t \in [0,1]$ the restriction map $\res{s}{t} \colon C([0,1];X) \to C([0,1];X) \colon \gamma \mapsto \gamma \circ f_s^t$
with $f_s^t \colon [0,1] \to [0,1] \colon x \mapsto (t-s)x + s$. Notice that $\res{s}{t}(\geo(X)) \subset \geo(X)$.
We call a set $\Gamma \subset \geo(X)$ \emph{non-branching} if for any $\gamma, \tilde\gamma \in \Gamma$ we have:
if there exists $t \in (0,1)$ such that $\res{0}{t}\gamma = \res{0}{t}\tilde\gamma$, then $\gamma = \tilde\gamma$.
A measure $\ppi\in \prob{\geo(X)}$ is said to be concentrated on a set of non-branching geodesics, if there exists
a non-branching Borel set $\Gamma \subset \geo(X)$ so that $\ppi(\Gamma)=1$.
The space consisting of all measures $\ppi\in \prob{\geo(X)}$
for which the mapping $t \mapsto (\e_t)_\#\ppi$ is a geodesic in $\prob X$ from $\mu_0 = (\e_0)_\#\ppi$ to $\mu_1 = (\e_1)_\#\ppi$ 
is denoted by $\gopt(\mu_0, \mu_1)$. Here the evaluation map is defined as $\e_t \colon C([0,1];X) \to X \colon \gamma \mapsto \gamma_t$.

For every geodesic $(\mu_t) \in \geo(\probt{X})$ joining $\mu_0$ to $\mu_1$, there also exists a measure
$\ppi \in \gopt(\mu_0, \mu_1)$ with $(e_t)_\#\ppi = \mu_t$. See for instance \cite[Theorem 2.10]{AG2013} for the proof of this fact. Notice that the measure $\ppi$ is not necessarily unique.

Our proof is heavily based on restricting a given $\ppi \in \gopt(\mu_0,\mu_1)$. Without mentioning
it every time, we use the fact that for a Borel $f \colon \geo(X) \to [0,\infty)$ with $\int_{\geo(X)} f\,\d\ppi = 1$
we have $f\ppi \in \gopt((e_0)_\#f\ppi,(e_1)_\#f\ppi)$, analogously to the restrictions of optimal plans. 
Another fact which we will repeatedly use is that $(\res{t}{s})_\#\ppi \in \gopt((e_t)_\#\ppi,(e_s)_\#\ppi)$.

\subsection{Ricci curvature lower bounds}
The Ricci curvature lower bounds are defined using the entropy functional $\entv \colon \prob X \to [-\infty,\infty]$. 
It is defined as
\[
 \entv(\mu) = \int_X \rho \log \rho \,\d\mm
\]
for any absolutely continuous measure $\mu = \rho\mm \in \prob X$ for which the positive part of
$\rho \log \rho$ is integrable.
For other measures in $\prob X$ 
we define $\entv(\mu) = \infty$. Notice that for an absolutely continuous measure with support in a set of finite $\mm$-measure 
the negative part of $\rho \log \rho$ is always integrable. As we will later see we may always assume the measures
to be supported in a set of finite $\mm$-measure in the proof of Theorem \ref{thm:main}.

Following the definition in \cite{S2006I} we call a metric measure space $(X,\sfd,\mm)$
a (weak) $CD(K,\infty)$-space, with some $K \in \R$, provided that for any $\mu_0, \mu_1 \in \probt X$
that are absolutely continuous with respect to $\mm$ 
there exists a geodesic $(\mu_t) \in \geo(\probt X)$ along which $\entv$ is $K$-convex, that is
\begin{equation}\label{eq:CDdef}
 \entv(\mu_t) \le (1-t)\entv(\mu_0) + t \entv(\mu_1)- \frac{K}{2}t(1-t)W_2^2(\mu_0,\mu_1)
\end{equation}
holds for all $t \in [0,1]$. We say that a functional is $K$-convex along $\ppi \in \gopt(\mu_0,\mu_1)$
if it is $K$-convex along the corresponding geodesic $(\e_t)_\#\ppi$.
If the inequality \eqref{eq:CDdef} is required to hold for all geodesics
 $(\mu_t) \in \geo(\probt X)$ with endpoints absolutely continuous with respect to $\mm$, the space is 
called a strong $CD(K,\infty)$-space. 

In the proofs we will need $\entv$ to be convex along restrictions $f\ppi$.
If the space $(X, \sfd, \mm)$ would only be a weak $CD(K,\infty)$-space, $K$-convexity along restrictions would not be guaranteed.
However, in strong $CD(K,\infty)$-spaces it follows directly from the definition.

\section{Proof of Theorem \ref{thm:main}}

The idea of the proof is similar to the proof of \cite[Theorem 4]{R2011}. We prove the claim by contradiction.
First we find two geodesics in the Wasserstein space
which start as the same geodesic and then branch out to two completely disjoint ones.
By comparing the two geodesics separately and on the other hand their sum as one geodesic,
we notice that there will be an extra drop of $\log 2$ in the entropy during the time interval
when the branching occurs. In order to arrive at a contradiction
this branching has to happen in a small enough time interval as in the Figure \ref{fig:contrad}.

There are two difficult steps in the proof before we arrive at the contradiction mentioned above.
First of all we have to find the two geodesics that branch out to two
completely separate ones. Once we have found them, we have to restrict the measures so that branching happens in a
small enough interval. Here the difficulty is that when we restrict the measures, also their marginals change.
By choosing the correct restrictions we can overcome this problem.

\begin{figure}
  \psfrag{0}{$0$}
  \psfrag{1}{$1$}
  \psfrag{t1}{$t_1$}
  \psfrag{t2}{$t_2$}
  \psfrag{g1}{$\Gamma_1$}
  \psfrag{g2}{$\Gamma_2$}
  \psfrag{ent}{$\entv$}
  \psfrag{l}{$\log 2$}
  \centering
   \includegraphics[width=0.5\textwidth]{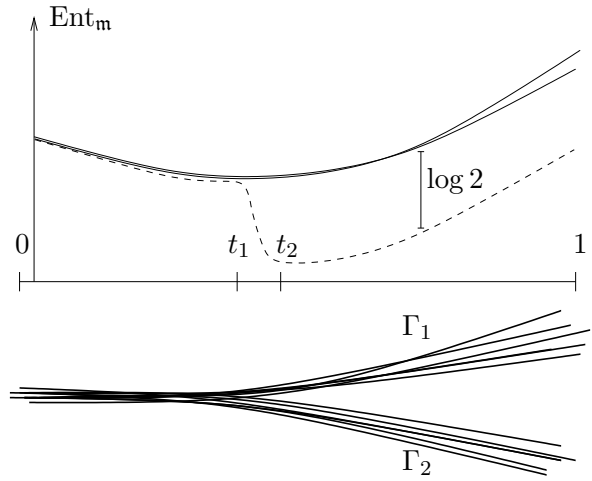}
  \caption{The idea in the proof of Theorem \ref{thm:main} is to find two sets of geodesics $\Gamma_1$ and $\Gamma_2$ so that 
           the transport supported on the union of the geodesics is a measure $\ppi$ satisfying the assumption 
           of Theorem \ref{thm:main} and so that the two sets of geodesics
           agree until time $t_1$ and then branch out so that they become totally separated after time $t_2$.
           The entropies along the set $\Gamma_1$ and along the set $\Gamma_2$ are illustrated by the solid $K$-convex graphs.
           The entropy along $\Gamma_1 \cup \Gamma_2$ is illustrated by the dashed graph. The non-$K$-convexity of this
           graph due to the drop of $\log 2$ in the entropy contradicts the assumption of the theorem.}
  \label{fig:contrad}
 \end{figure}

Let us start the proof with a simple lemma. It allows us to select the two disjoint geodesics
needed for the contradiction.
The geodesics will be selected using a probability measure on the product space $\geo(X)\times \geo(X)$.
Notice that the constant $1/5$ in the lemma is not sharp, however it is sufficient for our use.

\begin{lemma}\label{lma:largedecomp}
 Let $(X,\sfd)$ be a separable metric space.
 Then for any $\ssigma \in \prob{X \times X}$ for which $\ssigma(\{(x,x)\,:\,x\in X\}) = 0$
 there exists $E \subset X$ so that $\ssigma(E \times (X\setminus E)) > 1/5$.
\end{lemma}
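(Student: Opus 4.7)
The plan is to produce the set $E$ by a random construction. Since $\ssigma$ gives no mass to the diagonal $\Delta = \{(x,x): x\in X\}$, a partition of $X$ into sufficiently small Borel pieces will place most of the $\ssigma$-mass on off-diagonal products, and then a uniformly random assignment of the pieces to $E$ or to $X \setminus E$ will put a substantial fraction of this mass into $E \times (X\setminus E)$ in expectation, forcing at least one deterministic realization to exceed $1/5$.

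First I would use separability of $(X,\sfd)$ to build, for each $n \in \N$, a countable Borel partition $\{A_i^n\}_{i \in \N}$ of $X$ with $\diam(A_i^n) \le 1/n$ (peeling off $B(x_i,1/n)$ from a countable dense sequence works). Since $\bigcup_i (A_i^n \times A_i^n) \subset \{(x,y) : \sfd(x,y) \le 1/n\}$ and these sets decrease as $n \to \infty$ to a subset of $\Delta$, continuity from above of the finite measure $\ssigma$ together with $\ssigma(\Delta)=0$ yields
\[
\sum_{i} \ssigma(A_i^n \times A_i^n) \;\longrightarrow\; 0 \qquad \text{as } n \to \infty.
\]
Fix $n$ large enough that this diagonal sum is smaller than $1/5$; then $\sum_{i \neq j} \ssigma(A_i^n \times A_j^n) > 4/5$.

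Next, introduce independent fair coin flips $(\epsilon_i)_{i \in \N}$ and set $E = \bigcup_{\epsilon_i = 1} A_i^n$, which is Borel for every realization. For $i \neq j$ the event $\{A_i^n \subset E,\ A_j^n \subset X\setminus E\}$ has probability $1/4$, while the terms $i=j$ contribute nothing because the indicator $\mathbf{1}_{A_i^n \subset E}\mathbf{1}_{A_i^n \subset X\setminus E}$ is identically zero. Thus
\[
\mathbb{E}\bigl[\ssigma(E \times (X\setminus E))\bigr] \;=\; \frac{1}{4}\sum_{i\neq j} \ssigma(A_i^n \times A_j^n) \;>\; \frac{1}{4}\cdot\frac{4}{5} \;=\; \frac{1}{5}.
\]
Since the expectation exceeds $1/5$, at least one realization of the $\epsilon_i$ yields a Borel set $E$ with $\ssigma(E \times (X\setminus E)) > 1/5$, completing the proof.

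There is no real obstacle here; the only point to be careful about is ensuring the Borel partition exists with vanishing diagonal contribution, which follows from separability and the assumption that $\ssigma$ annihilates $\Delta$. The constant $1/5$ is suboptimal (the argument in fact gives values arbitrarily close to $1/4$), consistent with the remark in the lemma that sharpness is not needed.
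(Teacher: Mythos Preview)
Your proof is correct and follows essentially the same averaging idea as the paper: partition $X$ into small Borel pieces so that the diagonal blocks carry little $\ssigma$-mass, then average over choices of $E$ built from the pieces to see that some choice captures more than $1/5$ of the mass in $E\times(X\setminus E)$. The only cosmetic difference is that the paper first truncates to finitely many pieces and phrases the averaging as a counting argument over the $2^n-2$ nonempty proper subsets, whereas you work directly with the countable partition and phrase it probabilistically via i.i.d.\ coin flips; your route is slightly more streamlined but otherwise the same.
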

\begin{proof}
 We will first reduce the general case to the case where $X$ is a finite set. Take $\epsilon > 0$.
 Because the diagonal $\{(x,x)\,:\,x\in X\}$ has zero $\ssigma$-measure, there exists $\delta > 0$ so that
 \begin{equation}\label{eq:deltaneigh}
  \ssigma\big(\big\{(x,y) \in X \times X \,:\, \sfd(x,y)<\delta\big\}\big) < \epsilon.  
 \end{equation}
 Partition the space $X$ into a countable collection of Borel sets $(Q_i)_i$ with diameter
 at most $\delta/2$. Now there exists some $n \in \N$ so that
 \begin{equation}\label{eq:ncubes}
  \sum_{i=1}^n\left((\mathtt{p}^k)_\#\ssigma\right)(Q_i) > 1-\epsilon, \quad \text{for }k = 1,2. 
 \end{equation}
 Therefore, by combining \eqref{eq:deltaneigh} and \eqref{eq:ncubes} we have
 \[
   \ssigma\left(\bigcup_{1\le i,j \le n, i \ne j} Q_i\times Q_j\right) \ge 1 - 3\epsilon
 \]
 and so by forgetting a part of the space with arbitrarily small measure we may assume the space $X$ to consist of $n$ points.
 
 The existence of the set $E$ in the case where $X$ consists of $n$ points follows easily:
 There are a total number of $2^n-2$ ways to select a non-empty set $E \subsetneq X$ and
 for any pair of points $(i,j)$ with $i \ne j$ there are $2^{n-2}$ sets $E \subset X$ with $i \in E$ and $j \notin E$.
 As a consequence
 \[
  \sum_{E \subset X} \ssigma\big(E \times (X \setminus E)\big) = 2^{n-2}
 \]
 and so there has to be a set $E \subset X$ with
 \[
  \ssigma\big(E \times (X \setminus E)\big) \ge \frac{2^{n-2}}{2^n-2} > \frac14.
 \]
 Taking $\epsilon >0$ sufficiently small finishes the proof.
\end{proof}

Now we are ready to continue with the proof of Theorem \ref{thm:main}. The first reduction steps contain
ideas which were used in the proofs of \cite[Theorem 3.3]{G2011} and \cite[Theorem 4]{R2011}. The rest of the proof is then
close to that of \cite[Theorem 4]{R2011}.

\begin{proof}[Proof of Theorem \ref{thm:main}]
 Let us give the idea behind each step of the proof.
 We will prove the claim by contradiction, so we have a measure $\ppi$ that does not live on a non-branching
 set of geodesics.
 The first step is to restrict the measures in time and space to live inside a sufficiently small ball. 
 This will help in the last step of the proof in estimates involving the extra term coming from $K$-convexity when $K \ne 0$.
 In the second step we produce via disintegration a measure $\ssigma$ on the product space $\geo(X) \times \geo(X)$
 which gives positive measure to pairs of geodesics which start as the same but after some time $T$ branch out.

 In the third step we pushforward the measure $\ssigma$ on the product space to live on more suitable pairs of geodesics. The
 pairs of geodesics which we want to avoid are the ones that branch out and come back together infinitely often.
 This step will be needed only to achieve the reductions in the next step.
 In the fourth step of the proof we restrict the measure to geodesics which stay disjoint at least for some time $\delta>0$
 (independent of the geodesics) immediately after the branch out for the first time.
 We also restrict the measure so that its marginals have bounded densities.
 
 In the fifth step we find, for a given $\epsilon>0$, a time $t$ so that during the time interval $[t,t+\epsilon]$
 the measure sees lots of branching. In the sixth step we first use Lemma \ref{lma:largedecomp} to obtain two disjoint
 sets of geodesics so that their product has large measure. After this we further restrict the measure so that we can project
 the product measure to two measures $\ppi^{\text{up}}$ and $\ppi^{\text{down}}$ which have disjoint supports at time $t+\epsilon$.
 Finally in the last step, step seven we compare the entropies along $\ppi^{\text{up}}$, $\ppi^{\text{down}}$ and $\ppi^{\text{up}} + \ppi^{\text{down}}$
 to obtain a contradiction which we already mentioned in the beginning of this section and in the Figure \ref{fig:contrad}.

 \textbf{Step 1: Localization to a small ball.}\\
 Assume that Theorem \ref{thm:main} is not true. Then there exist $\mu_0, \mu_1 \in \probt X$ and an optimal 
 $\ppi \in \gopt(\mu_0,\mu_1)$ that is not concentrated on non-branching geodesics. Because the space $(X,\sfd)$
 is separable and the measure $\mm$ locally finite, we can cover $(X,\sfd)$ with a countable collection of balls $B(x_i,l_i/4)$
 so that 
 \begin{equation}\label{eq:lengthbound}
   l_i \le \sqrt{\frac{\log 2}{6|K|+1}}
 \end{equation} 
 and $\mm(B(x_i,l_i)) < \infty$. Since $\ppi$ is not concentrated on non-branching geodesics there exist some $i \in \N$ and $L >l_i$ so that
 $\ppi$ has some branching inside $B(x_i,l_i/4)$ along geodesics with length at most $L$.
 That is, $\ppi(\Gamma)<1$ for every $\Gamma \subset \geo(X)$ satisfying the following: if
 $\gamma^1, \gamma^2 \in \Gamma$ so that $\gamma^1_s = \gamma^2_s$ for all $s \in [0,t_0]$ with $\gamma^1_{t_0} \in B(x_i,l_i/4)$
 and $l(\gamma^1) \le L$, then $\gamma^1_s = \gamma^2_s$ also for all $s$ for which $\gamma^1_s \in  B(x_i,l_i/4)$ or $\gamma^2_s \in  B(x_i,l_i/4)$.

 Therefore there exists $t_1 \in (0,1-l_i/(4L))$ so that
 \[
  \ppi^r = (\res{t_1}{l_i/(4L)})_\#\left(\frac{1}{\ppi(\Gamma^r)}\ppi|_{\Gamma^r}\right),  
 \]
 with $\ppi(\Gamma^r)>0$ defined as
 \[
  \Gamma^r = \left\{\gamma \in \geo(X) \,:\,l(\gamma) \le L \text{ and }\gamma_{t_1 + l_i/(8L)} \in B(x_i,l_i/4) \right\},
 \]
 is not concentrated on non-branching geodesics. Now the measure $\ppi^r$ is supported on a set of geodesics that live
 inside the ball $B(x_i,l_i/2)$. Thus without loss of generality, we may assume from the beginning that the original measure $\ppi$ is concentrated
 on geodesics living inside some ball $B(x,l/2) \subset X$ with $l$ having the same bound from above as $l_i$ in \eqref{eq:lengthbound} and $\mm(B(x,l))< \infty$.

 \textbf{Step 2: Disintegrating the branching measure.}\\
 We claim that from the assumption that $\ppi$ is not concentrated on a non-branching set of geodesics we know that there exists some 
 $T \in (0,1)$ so that the measure $\ppi_\gamma$ is not a Dirac mass for a $(\res{0}{T})_\#\ppi$-positive set of
 curves $\gamma \in \geo(X)$, where $\{\ppi_\gamma\} \subset \prob{\geo(X)}$ is the disintegration of $\ppi$ with respect to $\res{0}{T}$. 
 The measure $\ppi_\gamma$ not being a Dirac mass means that it is concentrated on geodesics which coincide with $\gamma$ in $[0,T]$ and that a set
 of $\ppi_\gamma$-positive measure of geodesics do branch after time $T$.
 We prove the claim by contradiction. Assume that it is not true so that for any $T \in (0,1)$ the measures
 $\ppi_\gamma$ are Dirac for $(\res{0}{T})_\#\ppi$-almost every curve $\gamma \in \geo(X)$. Take $\epsilon > 0$. Since $\geo(X)$
 is separable there exists a countable Borel decomposition of $\geo(X)$ into disjointed sets $(A_{i,\epsilon})_{i\in I}$ with 
 $\diam(A_{i,\epsilon})<\epsilon$. For each $i \in I$ the map $M_{i,\epsilon, T} \colon \geo(X) \to [0,1] \colon \gamma \mapsto \ppi_\gamma(A_{i,\epsilon})$
 is Borel measurable and so $M_{i,\epsilon, T}^{-1}(\{1\})$ is a Borel set. Since $\ppi_\gamma$ are probability measures,
 $(M_{i,\epsilon, T}^{-1}(\{1\}))_{i \in I}$ are disjointed.
 By the assumption that almost every $\ppi_\gamma$ is a Dirac mass the Borel set
 \[
  \Gamma_{\epsilon, T} = \bigcup_{i \in I}\left((\res{0}{T})^{-1}(M_{i,\epsilon, T}^{-1}(\{1\})) \cap A_{i,\epsilon} \right)
 \]
 has full $\ppi$-measure. Therefore also the Borel set
 \[
  \Gamma_T = \bigcap_{\epsilon \in (0,1) \cap \Q}\Gamma_{\epsilon, T}
 \]
 has full $\ppi$-measure. Notice that for $\gamma^1, \gamma^2 \in \Gamma_T$ with $\gamma^1 \ne \gamma^2$ there exists
 $\epsilon \in (0,1)\cap \Q$ so that $\gamma^1 \in A_{i,\epsilon}$ and $\gamma^2 \in A_{j,\epsilon}$ with $i \ne j$.
 Therefore $\res{0}{T}(\gamma^1) \in M_{i,\epsilon, T}^{-1}(\{1\})$ and $\res{0}{T}(\gamma^2) \in M_{j,\epsilon, T}^{-1}(\{1\})$
 giving $\res{0}{T}(\gamma^1) \ne \res{0}{T}(\gamma^2)$.
 This means that the set $\Gamma_T$ does not contain geodesics that branch after time $T$.
 Thus the Borel set 
 \[
  \bigcap_{T \in (0,1) \cap \Q}\Gamma_{T}
 \]
 is non-branching and has full $\ppi$-measure, giving a contradiction.

 Consider a measure $\ssigma \in \prob{\geo(X) \times \geo(X)}$ obtained by integrating up the disintegrated measure $\ppi_\gamma$
 as a product measure $\ppi_\gamma\times\ppi_\gamma$. That is, for any Borel measurable $f \colon \geo(X)\times\geo(X) \to [0,\infty]$ we have
 \begin{align*}
  &\int_{\geo(X) \times \geo(X)}f(\gamma^1,\gamma^2)\,\d\ssigma(\gamma^1,\gamma^2) \\
 &\quad = \int_{\geo(X)}\int_{(\res{0}{T})^{-1}(\gamma) \times (\res{0}{T})^{-1}(\gamma)}f(\gamma^1,\gamma^2)\,\d(\ppi_\gamma\times\ppi_\gamma)(\gamma^1,\gamma^2)\,\d(\res{0}{T})_\#\ppi(\gamma).
 \end{align*}
 The measures $\ppi_\gamma\times\ppi_\gamma$ are illustrated in Figure \ref{fig:product}.
\begin{figure}
  \psfrag{t}{$t$}
  \psfrag{x}{$X$}
  \psfrag{T}{$T$}
  \psfrag{p}{$\ppi_\gamma$}
  \psfrag{pp}{$\ppi_\gamma\times\ppi_\gamma$}
  \centering
  \includegraphics[width=0.8\textwidth]{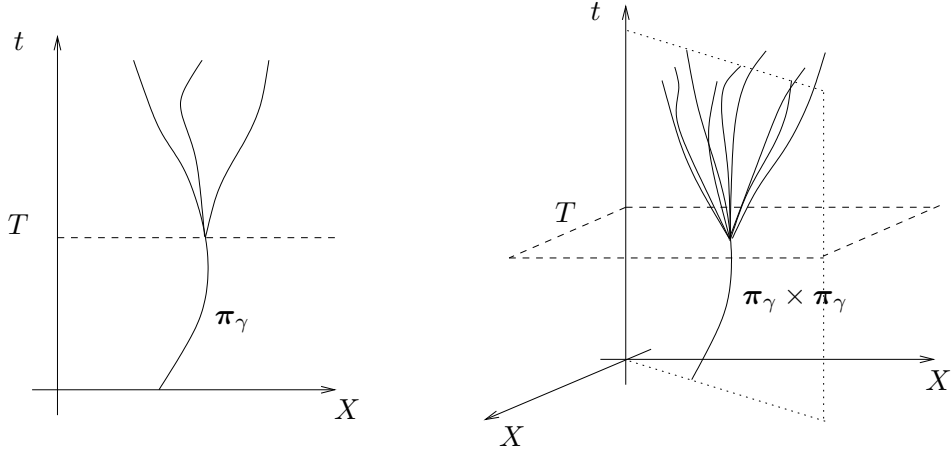}
  \caption{The measure $\ssigma$ is constructed by integrating up the measures $\ppi_\gamma\times\ppi_\gamma$.
    Since branching for $\ppi_\gamma$ occurs only after time $T$, all the measures $\ppi_\gamma\times\ppi_\gamma$ and hence also the measure $\ssigma$
    live on the diagonal of $X \times X$ until time $T$.}
  \label{fig:product}
 \end{figure}

 \textbf{Step 3: Reducing the number of branching points of pairs of geodesics.}\\
 It might happen that part of the measure $\ssigma$ lives on pairs of geodesics $(\gamma^1,\gamma^2)$ for which the set 
 $\{t \in [0,1] \,:\, \gamma_t^1 = \gamma_t^2\}$ consists of infinitely many intervals. We want to exclude this behaviour
 so that branching geodesics stay disjoint for a positive time immediately after branching. We will do this by a measurable
 selection. 

 We start by defining the subsets $\mathfrak{D}, \mathfrak{B}, \mathfrak{B}^1$ of $\geo(X)^2$. The first one is the diagonal set
 \[
  \mathfrak{D} := \{(\gamma^1,\gamma^2) \in \geo(X)^2 \,:\, \gamma^1 = \gamma^2\},
 \]
 the second one the set of pairs of geodesics branching after time $T$
 \[
  \mathfrak{B} := \{(\gamma^1,\gamma^2) \in \geo(X)^2\setminus \mathfrak{D} \,:\, \res{0}{T}\gamma^1 = \res{0}{T}\gamma^2\}
 \]
 and the final one the set of pairs of geodesics that branch exactly once after time $T$
 \[
  \mathfrak{B}^{1} := \left\{(\gamma^1,\gamma^2) \in \mathfrak{B} \,:\, \text{the set } \{t \in [0,1] \,:\, \gamma_t^1 \ne \gamma_t^2\} 
 \text{ is an interval}\right\}.
 \]
 The subsets $\mathfrak{D}$ and $\mathfrak{B}$ are clearly Borel. Let us show that $\mathfrak{B}^{1}$ is also Borel.
 For every $\epsilon, t_1, t_2 > 0$ define the closed sets
 \begin{align*}
  \mathfrak{B}_{\epsilon,t_1,t_2} = \{(\gamma^1,\gamma^2) \in \mathfrak{B} \,:\, & \text{ there exist } s \in [t_1,t_2]\text{ with } \gamma_{s}^1=\gamma_{s}^2 \text{ and}\\
        & r_1 \in [T,t_1], r_2 \in [t_2,1] \text{ such that }\sfd(\gamma_{r_i}^1,\gamma_{r_i}^2) \ge \epsilon \text{ for }i =1,2\}.
 \end{align*}
 Now
 \[
  \mathfrak{B}^1 = \mathfrak{B} \setminus \left(\bigcup_{\epsilon,t_1,t_2 \in (0,1]\cap \Q}\mathfrak{B}_{\epsilon,t_1,t_2} \right)
 \]
 and so $\mathfrak{B}^1$ is Borel.
 
 Now define $\mathtt{End} \colon \mathfrak{B}^{1} \to X^3 \colon (\gamma^1,\gamma^2) \mapsto (\gamma_0^1,\gamma_1^1,\gamma_1^2)$ and
 let $\mathtt{End}^{-1} \colon \mathtt{End}(\mathfrak{B}^1) \to \mathfrak{B}^1$ be its Suslin measurable right-inverse given by the Jankoff theorem \cite{J1941}. 
 (The set $\mathfrak{B}^{1}$ is a Suslin space as a Borel subset of a Polish space.
  The mapping $\mathtt{End}$ is continuous and thus $\mathtt{End}(\mathfrak{B}^1)$ is also Suslin space.
   Suslin subsets of a Polish space are universally measurable and so Suslin-measurability suffices for our considerations.)

 Now consider the Suslin measurable map $\mathtt{Br} \colon \geo(X)^2 \to \geo(X)^2$ given by
 \[
  \mathtt{Br}(\gamma^1,\gamma^2) = \begin{cases}
                        \mathtt{End}^{-1}(\gamma_0^1,\gamma_1^1,\gamma_1^2) & \text{if }(\gamma_0^1,\gamma_1^1,\gamma_1^2) \in \mathtt{End}(\mathfrak{B}^{1}),\\
                        (\gamma^1,\gamma^2) & \text{otherwise}.
                       \end{cases}
 \]
 The role of $\mathtt{Br}$ is to select an element from $\mathfrak{B}^1$ with the same starting point $\gamma_0^1$ and endpoints $\gamma_1^1$ and $\gamma_1^2$,
 if there exists one, and otherwise to give the original pair. For any branching pair of geodesics it gives a pair of geodesics
 that branches exactly once, like in Figure \ref{fig:1branch}.
 (Such geodesics exist:
 If $\gamma_1^1 \ne \gamma_1^2$, let $s \in (0,1)$ be the last time when $\gamma_s^1 = \gamma_s^2$. Then defining
 $\gamma^3|_{[0,s]} = \gamma^4|_{[0,s]} = \gamma^1|_{[0,s]}$, $\gamma^3|_{[s,1]} = \gamma^1|_{[s,1]}$ and 
 $\gamma^4|_{[s,1]} = \gamma^2|_{[s,1]}$ gives you the desired pair of geodesics $(\gamma^3, \gamma^4)$.
 If $\gamma_1^1 = \gamma_1^2$, take $0 < s_1 < s_2 \le 1$ such that $\gamma_{s_1}^1 = \gamma_{s_2}^2$, 
 $\gamma_{s_2}^1 = \gamma_{s_2}^2$ and $\gamma_{s}^1 \ne \gamma_{s}^2$ for all $s \in (s_1,s_2)$.
 Then define $\gamma^3|_{[0,s_1]\cup[s_2,1]} = \gamma^4|_{[[0,s_1]\cup[s_2,1]} = \gamma^1|_{[0,s_1]\cup[s_2,1]}$,
 $\gamma^3|_{[s_1,s_2]} = \gamma^1|_{[s_1,s_2]}$ and 
 $\gamma^4|_{[s_1,s_2]} = \gamma^2|_{[s_1,s_2]}$, and again you have the desired pair of geodesics $(\gamma^3, \gamma^4)$.)
 In other words $\mathtt{Br}(\mathfrak{B}) \subset \mathfrak{B}^1$.
 Now the pushforward measure $\mathtt{Br}_\#\ssigma$ gives $\mathfrak{B}^1$ positive measure and it has the same marginals as $\ssigma$, in other words,
 $(e_0,e_0)_\#(\mathtt{Br}_\#\ssigma) = (e_0,e_0)_\#\ssigma$ and $(e_1,e_1)_\#(\mathtt{Br}_\#\ssigma) = (e_1,e_1)_\#\ssigma$.
 Therefore, after pushing forward the measure by $\mathtt{Br}$ if necessary, we may assume that $\ssigma(\mathfrak{B}^1) > 0$.
\begin{figure}
  \psfrag{g1}{$\gamma^1$}
  \psfrag{g2}{$\gamma^2$}
  \psfrag{g3}{$\gamma^3$}
  \psfrag{g4}{$\gamma^4$}
  \psfrag{Br}{$\mathtt{Br}$}
  \centering
  \includegraphics[width=0.9\textwidth]{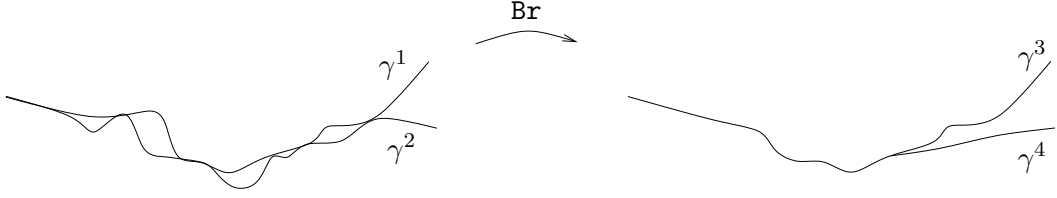}
  \caption{The mapping $\mathtt{Br}$ takes a branching pair of geodesics $(\gamma^1,\gamma^2)$ to another pair of geodesics $(\gamma^3,\gamma^4)$ that have the same endpoints, but branch only once.}
  \label{fig:1branch}
 \end{figure}

 \textbf{Step 4: Further restrictions of the measure $\ssigma$.}\\
 Next we will show that by restricting and rescaling the measure $\ssigma$ we may assume that there exist $S \in (T,1)$,
 $\delta \in (0,(1-S)/2)$ and $C > 0$ so that
\begin{equation}\label{eq:boundeddensity}
    \frac{\d(\e_0)\#(\mathtt{p}^1\ssigma)}{\d\mm},\frac{\d(\e_1)\#(\mathtt{p}^1\ssigma)}{\d\mm}, \frac{\d(\e_1)\#(\mathtt{p}^2\ssigma)}{\d\mm} < C  
 \end{equation}
 and for $\ssigma$-almost every $(\gamma^1,\gamma^2)$ there exists $t \in [T,S]$ such that
 \begin{equation}\label{eq:deltaseparation}
  \res{0}{t}(\gamma^1) = \res{0}{t}(\gamma^2) \text{ and } \gamma_s^1 \ne \gamma_s^2 \text{ for all } s\in(t,t+\delta).
 \end{equation}
 
 First of all 
 we may select $S \in (T,1)$ such that
 \[
   \ssigma\big(\big\{(\gamma^1,\gamma^2) \in \mathfrak{B}^1\,:\,\res{0}{S}(\gamma^1) \ne \res{0}{S}(\gamma^2)\big\}\big) > 0.
 \]
 On the other hand the set
 \[
  \big\{(\gamma^1,\gamma^2) \in \mathfrak{B}^1\,:\,\res{0}{S}(\gamma^1) \ne \res{0}{S}(\gamma^2)\big\}
 \]
 can be covered by a countable union of sets of the form
 \begin{align*}
 A_q = \big\{(\gamma^1,\gamma^2) \in \mathfrak{B}^1\,:\, \text{there exists } t \in [T,S] & \text{ such that }\res{0}{t}(\gamma^1) = \res{0}{t}(\gamma^2)\\
                                           & \text{ and } \gamma_s^1 \ne \gamma_s^2 \text{ for all } s\in(t,t+q)\big\}.
 \end{align*}
 Therefore there exists a $\delta \in (0,(1-S)/2)$ so that
 \[
 \ssigma(A_\delta) > 0.
 \]
 Notice that the sets $A_q$ are Borel because they can be written as
 \[
  A_q = \bigcap_{z \in (q,1)\cap\Q}\bigcup_{t,\epsilon \in (0,1-z]\cap \Q}\left\{(\gamma^1,\gamma^2) \in \mathfrak{B}^1\,:\, \sfd(\gamma_s^1,\gamma_s^2) \ge \epsilon \text{ for all } s\in [t,t+z]\right\}.
 \]
 By restricting $\ssigma$ to $A_\delta$ we have then obtained \eqref{eq:deltaseparation}. Since $\mu_0$ and $\mu_1$ are absolutely continuous with respect
 to $\mm$ a further restriction and finally rescaling yields \eqref{eq:boundeddensity}.

 \textbf{Step 5: Selecting a time $t$ where sufficient amount of branching occurs.}\\
 Define the function $f \colon [0,1] \to [0,1]$ as
 \[
  f(t) = \ssigma\big(\big\{(\gamma^1,\gamma^2) \,:\, \res{0}{t}\gamma^1 = \res{0}{t}\gamma^2\big\}\big).
 \]
 Notice that $f$ is decreasing, $f = 1$ on $[0,T]$ and $f = 0$ on $[S,1]$. Roughly speaking $f(t)$
 gives the amount of branching that will occur after time $t$.

 Take a small $\epsilon \in (0,\delta)$ and let $t \in [T,S]$ be such that $f(t) - f(t+\epsilon) > \epsilon$.
 Such $t$ exists because $f$ decreases by $1$ on the interval $[T,S]$.
 Define
 \[
  F = \big\{(\gamma^1,\gamma^2) \,:\, \res{0}{t}\gamma^1 = \res{0}{t}\gamma^2 \text{ and }\res{0}{t+\epsilon}\gamma^1 \ne \res{0}{t+\epsilon}\gamma^2\big\}.
 \]
 Now $\ssigma(F) > \epsilon$ by the choice of $t$ and \eqref{eq:deltaseparation}. 

 \textbf{Step 6: Finding two measures completely branching out during $[t,t+\epsilon]$.}\\
 By Lemma \ref{lma:largedecomp} there exists $E \subset \geo(X)$ so that
 \[
  \ssigma\Big(\big(E\times (\geo(X)\setminus E)\big)\cap F\Big) > \frac{\ssigma(F)}{5} > \frac{\epsilon}{5}.
 \]
 Let $\eta > 0$ be such that 
 \begin{equation}\label{eq:mostareapart}
  M := \ssigma(A) > \frac{\ssigma\Big(\big(E\times (\geo(X)\setminus E)\big)\cap F\Big)}2 > \frac\epsilon{10},
 \end{equation}
 where $A = \big\{(\gamma^1,\gamma^2) \in \big(E\times (\geo(X)\setminus E)\big)\cap F \,:\, \sfd(\gamma_{t+\epsilon}^1,\gamma_{t+\epsilon}^2)> \eta\big\}$.
 Such $\eta$ exists since $\gamma_{t+\epsilon}^1 \ne \gamma_{t+\epsilon}^2$ for $\ssigma$-almost every $(\gamma^1,\gamma^2)$ by 
 \eqref{eq:deltaseparation} and the fact that $\epsilon < \delta$.

 Now define $\tilde\ssigma = \ssigma\restr{A}$ and using it $\tilde\ppi = (\mathtt{p}^1)_\#\tilde\ssigma$.
 Write $\tilde\rho_s\mm  = (\e_s)_\#\tilde\ppi$ for all $s \in [0,1]$.
 By Jensen's inequality and \eqref{eq:mostareapart} we get
 \begin{equation}\label{eq:lowerbound}
  \int\tilde\rho_{t+\epsilon} \log \tilde\rho_{t+\epsilon}\,\d\mm \ge M \log \frac{M}{\mm(B(x,l/2))} \ge M \log \frac{\epsilon}{10\mm\big(B(x,l/2)\big)}.
 \end{equation}

 Since $X$ is separable we can decompose it into a countable collection $\{Q_i\}$ of disjointed Borel
 sets with diameter less than $\eta$.
 We claim that for at least one $Q \in \{Q_i\}$ we have
 \[
  w := \int_{Q}\tilde\rho_{t+\epsilon}\,\d\mm = \tilde\ppi(G) > 0
 \]
 for $G = \{\gamma \in \geo(X) \,:\, \gamma_{t + \epsilon} \in Q \}$ and
 \begin{equation}\label{eq:lowerboundsmall}
  \int\rho_{t+\epsilon}^{\text{up}} \log \rho_{t+\epsilon}^{\text{up}}\,\d\mm \ge w \log \frac{\epsilon}{10\mm\big(B(x,l/2)\big)},
 \end{equation} 
 where $\ppi^{\text{up}} = \tilde\ppi\restr{G} =  (\mathtt{p}^1)_\#\tilde\ssigma\restr{G\times \geo(X)}$ and
 $\mu_s^\text{up} = \rho_s^\text{up}\mm  = (\e_s)_\#\ppi^{\text{up}}$ for all $s \in [0,1]$.
 For if this were not the case we would have
 \begin{align*}
  \int\tilde\rho_{t+\epsilon} \log \tilde\rho_{t+\epsilon}\,\d\mm = \sum_{i} \int_{Q_i}\tilde\rho_{t+\epsilon} \log \tilde\rho_{t+\epsilon}\,\d\mm
  & < \sum_{i} \int_{Q_i}\tilde\rho_{t+\epsilon} \,\d\mm \log \frac{\epsilon}{10\mm\big(B(x,l/2)\big)}\\
  & = M \log \frac{\epsilon}{10\mm\big(B(x,l/2)\big)}
 \end{align*}
 contradicting \eqref{eq:lowerbound}.

 Define also $\ppi^{\text{down}} = (\mathtt{p}^2)_\#\tilde\ssigma\restr{G\times \geo(X)}$
 and $\mu_s^\text{down} = \rho_s^\text{down}\mm  = (\e_s)_\#\ppi^{\text{down}}$ for all $s \in [0,1]$.
 Notice that $(\res{0}{t})_\#\ppi^{\text{up}} = (\res{0}{t})_\#\ppi^{\text{down}}$
 and $\mu_{t+\epsilon}^\text{up} \perp \mu_{t+\epsilon}^\text{down}$.
 
 By \eqref{eq:boundeddensity} the estimate
 \begin{equation}\label{eq:upperbounds}
  \int \rho \log \rho \,\d\mm\le w \log C
 \end{equation}
 holds for all $\rho \in \{\rho_0^{\text{down}} = \rho_0^{\text{up}}, \rho_1^{\text{down}}, \rho_1^{\text{up}}\}$.

 \textbf{Step 7: Comparing entropies along different geodesics.}\\
 Now, similarly as in \cite{R2011} we can estimate using the $K$-convexity
 first along the measure $(\ppi^{\text{up}}+\ppi^{\text{down}})/(2w)$ between times $0$, $t$ and $t+\epsilon$,
 and then separately along $\ppi^{\text{up}}/w$ and $\ppi^{\text{down}}/w$ between times $t$, $t+\epsilon$ and $1$ to get
 \begin{align*}
  \int & \frac{\rho_t^{\text{down}}}{w} \log \frac{\rho_t^{\text{down}}}{w} \,\d\mm\\
    & \le \frac{\epsilon}{t+\epsilon} \int \frac{\rho_0^{\text{down}}}{w} \log \frac{\rho_0^{\text{down}}}{w}\,\d\mm + \frac{t}{t+\epsilon} \int \frac{\rho_{t+\epsilon}^{\text{down}}+\rho_{t+\epsilon}^{\text{up}}}{2w} \log \frac{\rho_{t+\epsilon}^{\text{down}}+\rho_{t+\epsilon}^{\text{up}}}{2w}\,\d\mm + \frac{|K|}{2}\frac{t}{t+\epsilon}l^2\\
    & \le \frac{\epsilon}{t+\epsilon} \log \frac{C}{w} - \frac{t}{t+\epsilon} \log 2 \\
    & \qquad     + \frac{t}{2(t+\epsilon)} \left(\int \frac{\rho_{t+\epsilon}^{\text{down}}}{w} \log \frac{\rho_{t+\epsilon}^{\text{down}}}{w}\,\d\mm + \int \frac{\rho_{t+\epsilon}^{\text{up}}}{w} \log \frac{\rho_{t+\epsilon}^{\text{up}}}{w}\,\d\mm \right )+ \frac{|K|}{2}\frac{t}{t+\epsilon}l^2 \\
    & \le \frac{\epsilon}{t+\epsilon} \log \frac{C}{w} - \frac{t}{t+\epsilon} \log 2 \\
& \qquad     + \frac{t}{2(t+\epsilon)} \left( \frac{\epsilon}{1-t}\int \frac{\rho_{1}^{\text{down}}}{w} \log \frac{\rho_{1}^{\text{down}}}{w}\,\d\mm + \frac{1-t-\epsilon}{1-t}\int \frac{\rho_{t}^{\text{down}}}{w} \log \frac{\rho_{t}^{\text{down}}}{w}\,\d\mm \right ) \\
& \qquad     + \frac{t}{2(t+\epsilon)} \left( \frac{\epsilon}{1-t}\int \frac{\rho_{1}^{\text{up}}}{w} \log \frac{\rho_{1}^{\text{up}}}{w}\,\d\mm + \frac{1-t-\epsilon}{1-t}\int \frac{\rho_{t}^{\text{up}}}{w} \log \frac{\rho_{t}^{\text{up}}}{w}\,\d\mm \right )+ |K|\frac{t}{t+\epsilon}l^2 \\
& \le \left(\frac{\epsilon}{t+\epsilon} + \frac{t\epsilon}{(1-t)(t+\epsilon)} \right)\log \frac{C}{w} - \frac{t}{t+\epsilon} \log 2\\
& \qquad + \frac{t(1-t-\epsilon)}{(t+\epsilon)(1-t)}\int \frac{\rho_t^{\text{down}}}{w} \log \frac{\rho_t^{\text{down}}}{w} \,\d\mm + |K|\frac{t}{t+\epsilon}l^2.
 \end{align*}
 Moving the integral term from the right-hand side to the left and using \eqref{eq:lengthbound} gives
 \begin{align*}
 \frac{\epsilon}{(t+\epsilon)(1-t)}& \int \frac{\rho_t^{\text{down}}}{w} \log \frac{\rho_t^{\text{down}}}{w} \,\d\mm \\
& \le \left(\frac{\epsilon}{t+\epsilon} + \frac{t\epsilon}{(1-t)(t+\epsilon)} \right) \log \frac{C}{w} - \frac{t}{t+\epsilon} \log 2 + |K|\frac{t}{t+\epsilon}l^2  \\
& \le \left(\frac{\epsilon}{t+\epsilon} + \frac{t\epsilon}{(1-t)(t+\epsilon)} \right) \log \frac{C}{w} - \frac{2t}{3(t+\epsilon)} \log 2
 \end{align*} 
 which is the same as
 \begin{equation}\label{eq:almostfinalup}
  \int \frac{\rho_t^{\text{down}}}{w} \log \frac{\rho_t^{\text{down}}}{w} \,\d\mm \le \log \frac{C}{w} - \frac{2t(1-t)}{3\epsilon} \log 2.
 \end{equation}

 Using convexity along $\ppi^{\text{up}}/w$ once more together with \eqref{eq:almostfinalup}, \eqref{eq:lengthbound} and the fact that $\epsilon \le t$ yields
 \begin{align*}
  \int \frac{\rho_{t+\epsilon}^{\text{up}}}{w} \log \frac{\rho_{t+\epsilon}^{\text{up}}}{w}\,\d\mm & \le 
   \frac{\epsilon}{1-t} \log \frac{C}{w} + \frac{1-t-\epsilon}{1-t}\int \frac{\rho_{t}^{\text{down}}}{w} \log \frac{\rho_{t}^{\text{down}}}{w}\,\d\mm + \frac{|K|}2\epsilon(1-t-\epsilon) l^2\\
 & \le \log \frac{C}{w} - \frac{t(1-t-\epsilon)}{3\epsilon}\log 2.
 \end{align*}

 When we combine this with \eqref{eq:lowerboundsmall} we have
 \[
  3\epsilon \left(\log \frac{\epsilon}{10\mm\big(B(x,l/2)\big)} - \log C \right)\le - t(1-t-\epsilon) \log 2 \le - \frac{TS}2\log 2
 \]
 which is a contradiction as the left-hand side goes to $0$ when $\epsilon \downarrow 0$.
\end{proof}

\begin{remark}
 In the proof of Theorem \ref{thm:main} we worked with two fixed marginals $\mu_0, \mu_1$.
 Therefore the assumption in Theorem \ref{thm:main} of being a strong $CD(K,\infty)$-space could be weakened.
 The slightly stronger (but more complicated-looking) version of Theorem \ref{thm:main} would be the following.

 Let $\mu_0, \mu_1 \in \probt X$ be absolutely continuous.
 Suppose that for every $\ppi \in \gopt(\mu_0,\mu_1)$, every $t,s \in [0,1]$ and any Borel function
 $f \colon \geo(X) \to [0,\infty)$ with $\int_{\geo(X)} f\,\d\ppi = 1$ the entropy $\entv$ is $K$-convex along 
 $(\res{s}{t})_\#(f\ppi)$.
 Then any $\ppi \in \gopt(\mu_0,\mu_1)$ is concentrated on a set of non-branching geodesics.
\end{remark}

\section{Proof of Corollary \ref{cor:maps}}

Let us now outline the proof of Corollary \ref{cor:maps}. As usual, it suffices to prove that every optimal plan
is given by a map. This implies uniqueness of optimal plans because for any $\ppi^1,\ppi^2 \in \gopt(\mu_0,\mu_1)$ also $\frac12(\ppi^1 + \ppi^2) \in \gopt(\mu_0,\mu_1)$.
Suppose then that there are two
$\mu_0, \mu_1 \in \probt X$ which are absolutely continuous with respect to $\mm$ and that there is
$\ppi \in \gopt(\mu_0,\mu_1)$ which is not induced by an optimal map.

Because $\mu_0 = \rho_0\mm$ is absolutely continuous, the union
\[
 \bigcup_{C \ge 0}\Gamma_C, \qquad \Gamma_C = \{\gamma \in \geo(X) \,:\, \rho_0(\gamma_0) \le C\}
\]
has full $\ppi$-measure. Therefore for some $C \ge 0$ the measure $\ppi|_{\Gamma_C}$ is not induced by a map.
Therefore we may assume $\mu_0$ to have bounded density. Similarly we may assume $\mu_1$ to have bounded density.
Emptying the space $X$ with larger and larger balls we may also assume $\mu_0$ and $\mu_1$ to have bounded supports.
Therefore we may assume $\entv(\mu_0), \entv(\mu_1) \in \R$.

In the proof of \cite[Theorem 3.3]{G2011} Gigli finds two probability measures $\ppi^1, \ppi^2 \ll \ppi$ with $\ppi^1 \perp \ppi^2$
and $(e_0)_\#\ppi^1 = (e_0)_\#\ppi^2 = \mm|_D$ for a compact $D \subset X$ with $\mm(D)>0$.
But by \cite[Lemma 3.2]{G2011}, which holds also under the strong $CD(K,\infty)$ assumption, we have
\[
 \liminf_{t \searrow 0} \mm(\{\rho_t^i > 0\}) \ge \mm(D), \qquad i =1,2
\]
for the densities $\rho_t^i$ of $(e_t)_\#\ppi^i$. Therefore for some small time $t \in (0,1)$ the sets
$\{\rho_t^1>0\}$ and $\{\rho_t^2>0\}$ must intersect in a set $E$ of positive $\mm$-measure.

So far we have followed the proof of \cite[Theorem 3.3]{G2011}. Now the final step in the case of non-branching
$CD(K,\infty)$-spaces would be to say that no two different geodesics in the support
of an optimal transport can intersect. In the essentially non-branching spaces this conclusion is not so clear,
so we argue differently.

The heuristic idea is to mix the measures $\ppi^1$ and $\ppi^2$ so that at time $t$ we are allowed to change from 
the geodesics where $\ppi^1$ lives to the geodesics where $\ppi^2$ lives, and vice versa.

To write this more rigorously we first pushforward the combined measure $\ppi^1 + \ppi^2$ to the left and right part from time $t$
\[
 \ppi^\text{left} = \frac12\left((\res{0}{t})_\#\ppi^1 + (\res{0}{t})_\#\ppi^2\right), \qquad\ppi^\text{right} = \frac12\left((\res{t}{1})_\#\ppi^1 + (\res{t}{1})_\#\ppi^2\right).
\]
Now let $\{\ppi_x^\text{left}\}$ be the disintegration of $\ppi^\text{left}$ with respect to $e_1$ and 
let $\{\ppi_x^\text{right}\}$ be the disintegration of $\ppi^\text{right}$ with respect to $e_0$.

Observe that the mapping
\[
 \Sp \colon C([0,1];X) \to \left\{(\gamma^1, \gamma^2) \in C([0,1];X) \times C([0,1];X) \,:\, \gamma_1^1 = \gamma_0^2\right\}
  \colon \gamma \mapsto (\res{0}{t}\gamma, \res{t}{1}\gamma)
\]
is bi-Lipschitz.
We define the collection of measures $\{\ppi_x\}$ on $C([0,1];X)$ as pullbacks of the product measures $\ppi_x^\text{left} \times \ppi_x^\text{right}$
under $\Sp$, in other words as the pushforwards
\[
 \ppi_x = (\Sp^{-1})_\#(\ppi_x^\text{left} \times \ppi_x^\text{right}).
\]
Finally we integrate these up to a mixed measure $\ppi^\text{mix}$ by requiring for all Borel $f  \colon C([0,1];X)\to [0,\infty]$
that
\[
 \int_{C([0,1];X)}f(\gamma)\,\d\ppi^\text{mix}(\gamma) = \int_{C([0,1];X)}\int_{(e_t)^{-1}(x)}f(\gamma)\,\d\ppi_x(\gamma)\,\d((e_1)_\#\ppi^\text{left})(x).
\]

Since $\ppi$ is optimal, there exists a set $\Gamma \subset \geo(X)$ such that $\ppi(\Gamma) = 1$ and the set 
$\{(\gamma_0,\gamma_1) \,:\, \gamma \in \Gamma\}$ is cyclically monotone. Because $\ppi^1, \ppi^2 \ll \ppi$,
also $\ppi^1(\Gamma) = \ppi^2(\Gamma) = 1$. For any pair $\gamma^1,\gamma^2 \in \Gamma$
with $\gamma_t^1 = \gamma_t^2$ we have by the cyclical monotonicity and the triangle inequality that
\begin{equation}\label{eq:chain}
\begin{split}
  \sfd^2(\gamma_0^1,\gamma_1^1) + \sfd^2(\gamma_0^2,\gamma_1^2)
 & \le \sfd^2(\gamma_0^1,\gamma_1^2) + \sfd^2(\gamma_0^2,\gamma_1^1)\\
 & \le \left(tl(\gamma^1) + (1-t)l(\gamma^2)\right)^2 + \left(tl(\gamma^2) + (1-t)l(\gamma^1)\right)^2\\
 & = l(\gamma^1)^2 + l(\gamma^2)^2 - 2t(1-t)\left(l(\gamma^1)-l(\gamma^2)\right)^2\\
 & \le l(\gamma^1)^2 + l(\gamma^2)^2 = \sfd^2(\gamma_0^1,\gamma_1^1) + \sfd^2(\gamma_0^2,\gamma_1^2),
\end{split}
\end{equation}
and so all the inequalities in the above chain \eqref{eq:chain} are equalities.
Consequently $l(\gamma^1) = l(\gamma^2)$ meaning that for $\mm$-almost every $x \in X$ there 
exists $l_x$ such that $\ppi_x^\text{left}$ is concentrated
on geodesics of length $tl_x$ and $\ppi_x^\text{right}$ is concentrated on geodesics of length $(1-t)l_x$.
The measure $\ppi^\text{mix}$ is concentrated on 
\[
\left\{\gamma \in C([0,1];X)\,:\, \text{there exist }\gamma^1,\gamma^2 \in \Gamma \text{ s.t. } \res{0}{t}\gamma = \res{0}{t}\gamma^1 \text{ and } \res{t}{1}\gamma = \res{t}{1}\gamma^2\right\},
\]
which by the equalities in \eqref{eq:chain} is a subset of $\geo(X)$.

Furthermore, since $(e_i)_\#\ppi^\text{mix} = (e_i)_\#\left(\frac12(\ppi^1+\ppi^2)\right)$ for $i = 0,1$ and
\begin{align*}
 \int_{\geo(X)} \frac12\sfd^2(\gamma_0,\gamma_1)\,\d(\ppi^1+\ppi^2)(\gamma)
  & = \int_{X} \frac12l_x^2\,\d((e_t)_\#(\ppi^1+\ppi^2))(x)\\
  & = \int_{X} l_x^2\,\d((e_t)_\#\ppi^\text{mix})(x) =\int_{\geo(X)}\sfd^2(\gamma_0,\gamma_1)\,\d\ppi^\text{mix}(\gamma),
\end{align*}
the measure $\ppi^\text{mix}$ is optimal.



Because $\ppi^1 \perp \ppi^2$ and because $\{\rho_t^1>0\}$ and $\{\rho_t^2>0\}$ intersect in the set $E$ of positive $\mm$-measure,
 we know that for $\mm$-almost every $x \in E$ at least one of the measures $\ppi_x^\text{left}, \ppi_x^\text{right}$ is not
a Dirac mass.
Therefore $\ppi^\text{mix}$ is not essentially non-branching or the time-inverse $I_\#\ppi^\text{mix}$ defined via the mapping
\[
 I \colon \geo(X) \to \geo(X) \colon \gamma \mapsto (\gamma' \colon [0,1] \to X \colon t \mapsto \gamma_{1-t})
\]
is not essentially non-branching. This contradicts Theorem \ref{thm:main}.

\begin{acknowledgment}
T.R. acknowledges the financial support of the European Project ERC AdG *GeMeThNES* and the Academy of Finland project no. 137528.
The authors also thank the anonymous referee for carefully reading the paper.
\end{acknowledgment}


\begin{thebibliography}{10}
 
  \bibitem{AG2013} L.~Ambrosio and N.~Gigli,
  \emph{A user's guide to optimal transport},
  in Modelling and Optimisation of Flows on Networks (B. Piccoli and M. Rascle eds.), Lecture Notes in Mathematics, Springer, Berlin, 2013, pp. 1--155.

  \bibitem{AGMR2012} L.~Ambrosio, N.~Gigli, A.~Mondino and T.~Rajala,
  \emph{Riemannian Ricci curvature lower bounds in metric measure spaces with $\sigma$-finite measure},
  Trans. Amer. Math. Soc., to appear, arXiv:1207.4924.

 \bibitem{AGS2011}  L.~Ambrosio, N.~Gigli and G.~Savar\'e,  
  \emph{Calculus and heat flows in metric measure spaces with {R}icci curvature bounded from below},
  Invent. Math., to appear,  arXiv:1106.2090.

 \bibitem{AGS2011b} L.~Ambrosio, N.~Gigli and G.~Savar\'e,
  \emph{Metric measure spaces with Riemannian Ricci curvature bounded from below},
  preprint, arXiv:1109.0222.

 \bibitem{AR2011} L.~Ambrosio and T.~Rajala,
  \emph{Slopes of Kantorovich potentials and existence of optimal transport maps in metric measure spaces},
  Ann. Mat. Pura Appl., to appear, arXiv:1111.5119.

 \bibitem{B2008} J. Bertrand,
 \emph{Existence and uniqueness of optimal maps on Alexandrov spaces},
  Adv. Math., \textbf{219} (2008), no. 3, 838--851.

 \bibitem{B1987} Y.~Brenier,
 \emph{D\'ecomposition polaire et r\'earrangement monotone des champs de vecteurs},
 C. R. Acad. Sci. Paris S\'er. I Math., \textbf{305} (1987), 805--808.

 \bibitem{DS2008} S. Daneri and G. Savar\'e,
  \emph{Eulerian calculus for the displacement convexity in the Wasserstein distance},
  SIAM J. Math. Anal. \textbf{40} (2008), no. 3, 1104--1122.

 \bibitem{G2011} N. Gigli,
  \emph{Optimal maps in non branching spaces with Ricci curvature bounded from below},
  Geom. Funct. Anal. \textbf{22} (2012), no. 4, 990--999.

 \bibitem{GMS2012} N.~Gigli, A.~Mondino and G.~Savar\'e,
  \emph{A notion of pointed convergence of non-compact metric measure spaces and stability of Ricci curvature bounds and heat flows},
  preprint, (2012).

 \bibitem{J1941} W.~Jankoff, 
  \emph{Sur l'uniformisation des ensembles A},
   C. R. (Doklady) Acad. Sci. URSS (N.S.) \textbf{30} (1941), 597--598.


 \bibitem{LV2009} J. Lott and C. Villani,
  \emph{Ricci curvature for metric-measure spaces via optimal transport},
  Ann. of Math. \textbf{169} (2009), no. 3, 903--991.

 \bibitem{M2001} R.~J.~McCann, 
  \emph{Polar factorization of maps on riemannian manifolds},
  Geom. Funct. Anal. \textbf{11} (2001), 589--608.

 \bibitem{R2011b} T.~Rajala,
  \emph{Interpolated measures with bounded density in metric spaces satisfying the curvature-dimension conditions of Sturm},
  J. Funct. Anal., \textbf{263} (2012), no. 4, 896--924.

 \bibitem{R2011} T.~Rajala,
  \emph{Local Poincar\'e inequalities from stable curvature conditions on metric spaces},
  Calc. Var. Partial Differential Equations, \textbf{44} (2012), no. 3--4, 477--494.

 \bibitem{S2006I} K.-T.~Sturm,
  \emph{On the geometry of metric measure spaces. I},
  Acta Math. \textbf{196} (2006), no. 1, 65--131.

 \bibitem{S2006II} K.-T.~Sturm,
  \emph{On the geometry of metric measure spaces. II},
  Acta Math. \textbf{196} (2006), no. 1, 133--177.

  \bibitem{V2009} C. Villani,
     \emph{Optimal transport. Old and new},
     vol. 338 of Grundlehren der Mathematischen Wissenschaften, Springer-Verlag, Berlin, 2009.
\end{thebibliography}
\end{document}